\documentclass[a4paper,11pt]{amsproc}

\usepackage{amssymb}
\usepackage{mathtools}
\usepackage{hyperref}

\newtheorem{theorem}{Theorem}

\newtheorem{corollary}[theorem]{Corollary}
\newtheorem{prop}[theorem]{Proposition}
\theoremstyle{remark}
\newtheorem{remark}[theorem]{Remark}

\newcommand{\norm}[1]{\left\lVert #1\right\rVert}
\newcommand{\cE}{\mathcal E}
\newcommand{\cH}{\mathcal H}

\title[Sublinear growth and uniform convexity]{Sublinear growth of $1$-cocycles and \\ uniform convexity}
\author{Andreas Thom}
\address{A.T., Institute of Geometry, TU Dresden, 01062 Dresden, Germany}
\email{andreas.thom@tu-dresden.de}

\date{\today}

\subjclass[2010]{20F65, 22D12, 46B20}
\keywords{1-cocycle, uniformly convex Banach space, affine isometric action, $c_0$-representation, word length}

\begin{document}

\begin{abstract}

Let G be a finitely generated group, let $\pi \colon G \to {\rm GL}(\cE)$ be a uniformly bounded $c_0$-representation on a superreflexive Banach space $\cE$, and let $b \colon G \to \cE$ be a $1$-cocycle for $\pi$. Then $b$ has sublinear growth with respect to the word length. As a corollary we obtain the corresponding Hilbert space statement for strongly mixing unitary representations.
\end{abstract}

\maketitle

\section{Introduction}

Let $G$ be a finitely generated group with a fixed finite symmetric generating set, and let $|g|$ denote the associated word length. Let $\cE$ be a Banach space. Let
\[
\pi\colon G\to {\rm GL}(\cE)
\]
be a representation by linear isomorphisms. The representation $\pi$ is called \emph{uniformly bounded} if there exists $C>0$ such that $\norm{\pi(g)}\le C$ for every $g\in G$. We say that $\pi$ is a \emph{$c_{0}$-representation} if for every $\xi\in \cE$ and every $\varphi\in \cE^{\ast}$ one has
\[
\varphi(\pi(g)\xi)\to 0
\qquad\text{as } g\to\infty.
\]
Equivalently, every orbit vector tends weakly to $0$ at infinity; this terminology is standard in~\cite{BFGM07,CTVBanach08}.
An \emph{affine isometric action} with linear part $\pi$ is encoded by a $1$-cocycle
\[
b(gh)=b(g)+\pi(g)b(h)
\qquad (g,h\in G).
\]
The \emph{modulus of convexity} is defined for a Banach space $\cE$ by
\[
\delta_{\cE}(\varepsilon):=\inf\Bigl\{1-\Bigl\lVert\frac{x+y}{2}\Bigr\rVert: \norm{x}=\norm{y}=1,\ \norm{x-y}\ge \varepsilon\Bigr\}
\qquad (0<\varepsilon\le 2).
\]
A Banach space is called \emph{uniformly convex} if $\delta_{\cE}(\varepsilon)>0$ for every $\varepsilon>0$; see, for example, Benyamini--Lindenstrauss~\cite{BL00}. A Banach space $\cE$ is called \emph{superreflexive} if it admits an equivalent uniformly convex norm; see, for example, \cite{BFGM07,BL00} for background.

The purpose of this note is to show that for uniformly bounded $c_{0}$-representations on superreflexive Banach spaces, every $1$-cocycle has sublinear growth along the word length, see Theorem \ref{thm:main}. 

In the Hilbert space setting, the corresponding class of unitary (or more generally, uniformly bounded) representations are called \emph{strongly mixing}. Growth and rigidity properties of $1$-cocycles into unitary representations are intertwined with affine isometric actions, reduced cohomology, and harmonic cocycles; classical references include Guichardet~\cite{Guichardet72}, Shalom~\cite{Shalom00}, and the monograph of Bekka--de la Harpe--Valette~\cite{BdlHV08}. More recently, Cornulier--Tessera--Valette~\cite{CTV07} analyzed the relation between sublinear growth and almost coboundaries, while Chifan--Sinclair~\cite{CS15} and Ozawa~\cite{Oz18} obtained vanishing and weak sublinearity results for cocycles into weakly mixing representations of amenable groups; see also Gournay~\cite{Gou18} for related mixing/cohomological phenomena.

To the best of our knowledge, even the Hilbert space statement for strongly mixing representations (stated here as Corollary~\ref{cor:hilbert}) does not seem to be explicitly recorded in the literature, despite the fact that quantitative and qualitative control of cocycles remains an active topic of current research. 

\medskip

Our main result is the following theorem. It will be proved in the next section.
\begin{theorem}\label{thm:main}
Let $G$ be a finitely generated group, $\cE$ be a superreflexive Banach space,
$
\pi\colon G\to {\rm GL}(\cE)
$
be a uniformly bounded $c_{0}$-representation, and  $b\colon G\to \cE$ be a $1$-cocycle. Then
\[
\limsup_{|g|\to\infty}\frac{\norm{b(g)}}{|g|}=0.
\]
Equivalently, every such $1$-cocycle has sublinear growth.
\end{theorem}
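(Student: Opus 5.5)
The plan is to argue by contradiction. First I reduce to the case where $\pi$ is isometric and $\cE$ is uniformly convex. Then I exploit the near-equality case of the triangle inequality that a maximally growing cocycle forces. Finally I contradict this with the $c_0$ condition by means of a weak-compactness argument.

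\emph{Step 1 (renorming).} I will use the fact, going back to Bader--Furman--Gelander--Monod~\cite{BFGM07}, that a uniformly bounded representation on a superreflexive space admits an equivalent norm that is $\pi(G)$-invariant and still uniformly convex. Passing to an equivalent norm changes neither the $c_0$ property nor sublinearity. So I may assume that $\pi$ is isometric and $\delta_\cE(\varepsilon)>0$ for all $\varepsilon>0$.

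\emph{Step 2 (subadditivity and near-extremal elements).} Put $\ell(n)=\max_{|g|\le n}\norm{b(g)}$. Since $\pi$ is isometric, $\norm{b(gh)}\le \norm{b(g)}+\norm{b(h)}$, so $\ell$ is subadditive. Hence $L=\lim_n \ell(n)/n$ exists and equals the $\limsup$ in Theorem~\ref{thm:main}. Suppose $L>0$ and fix $N\ge 2$.

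Choose $g_n$ with $|g_n|=n$ and $\norm{b(g_n)}\ge (L-o(1))n$. Write $g_n=a_1\cdots a_N$ as a geodesic word cut into $N$ pieces of length about $n/N$, and set $p_j=a_1\cdots a_{j-1}$. Then
\[
b(g_n)=\sum_{j=1}^N \pi(p_j)\,b(a_j),\qquad \norm{\pi(p_j)b(a_j)}\le (L+o(1))\frac nN .
\]
The sum of the norms of the summands is therefore nearly equal to the norm of the sum. Iterating the definition of $\delta_\cE$ (first on pairs, then by induction on $N$) shows that every summand is $o(n)$-close to $b(g_n)/N$. The same reasoning applies to every sub-block $a_i\cdots a_j$. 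Consequently the normalized vectors $u_n=b(g_n)/\norm{b(g_n)}$ satisfy
\[
\Bigl\|\pi(p_j)\,\frac{b(a_j)}{\norm{b(a_j)}}-u_n\Bigr\|\to 0 \quad\text{for each } j.
\]
In the same way, the prefix vectors $b(p_j)$ point, up to $o(n)$, in the direction $u_n$ with length about $(j-1)Ln/N$.

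\emph{Step 3 (contradiction with $c_0$), the main obstacle.} The $c_0$ condition controls only a fixed vector translated to infinity, while Step~2 produces vectors that vary with $n$. To bridge this I would take a norming functional $\varphi_n$ with $\norm{\varphi_n}=1$ and $\varphi_n(u_n)=1$. By reflexivity I pass to a subsequence along which $u_n\rightharpoonup u$ and $\varphi_n\rightharpoonup^\ast\varphi$.

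Uniform convexity gives the key dichotomy. If a weak limit of unit vectors has norm $<1$, then averages of far-apart members of the sequence have norm bounded away from $1$. Concretely, $\norm{u}<1$ implies $\norm{\tfrac12(u_n+u_m)}\le 1-\eta$ for suitable large $n\ne m$. This is where the argument must touch the $c_0$ hypothesis, and I would try two ways of combining Step~2 with it.

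\begin{itemize}
\item \emph{Using $c_0$ on the pieces.} The pieces $a_j$ and prefixes $p_j$ go to infinity in $G$. The vectors $\pi(p_j)^{-1}u_n$ are near-unit directions of $b(a_j)$, and $\pi(p_j)^{-1}u_n\rightharpoonup 0$ would follow if the directions $b(a_j)/\norm{b(a_j)}$ could be frozen along a subsequence.
\item \emph{Alternative route.} Apply the $c_0$ property to the single fixed vector $u$. Then $\varphi_n(\pi(p_j)u)\to 0$, whereas Step~2 forces $\varphi_n(\pi(p_j)u_n')\approx 1$ for the corresponding normalized piece $u_n'$.
\end{itemize}

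Either route aims at the same inequality. The plan is to show that both $\varphi_n(u_n)\approx 1$ and a uniform-convexity gap persist along the chain $u_n \approx \pi(p_j)u_n'$. This should contradict weak nullity once $N$ is large, giving $L=0$.

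I expect making this last passage rigorous to be the real difficulty, because of the interchange of the limits $n\to\infty$ and $N\to\infty$ and the need to freeze the directions. I would first try the second route above.
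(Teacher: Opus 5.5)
Your Steps~1 and~2 are fine. For fixed $N$, a norming functional for $b(g_n)$ does force every normalized piece $\pi(p_j)b(a_j)/\norm{b(a_j)}$ to be $o(1)$-close to $u_n$. Step~3, however, is not an argument, and the gap is exactly the one you flag. The $c_0$ hypothesis says $\varphi(\pi(g)\xi)\to 0$ only for a \emph{fixed} vector $\xi$ and a \emph{fixed} functional $\varphi$, and your construction never produces such a pair. Neither route supplies one.

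In the second route, $\varphi_n\to\varphi$ weak-$*$ gives no control of $\varphi_n(\pi(p_j)u)$ when $p_j$ also moves with $n$. Moreover, replacing $\pi(p_j)u$ by $\pi(p_j)u_n'$ would need $u_n'\to u$ in norm. You only have a weak limit $u$ of the $u_n$, which may be $0$ and is unrelated to the pieces $u_n'$.

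In the first route, the directions $b(a_j)/\norm{b(a_j)}$ cannot be frozen. Since $|a_j|\approx n/N\to\infty$, they come from different group elements for each $n$. Without norm compactness no subsequence converges in norm, and $c_0$ gives uniformity over norm-compact sets of vectors, not weakly compact ones.

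The missing idea is to fix the \emph{length} of the pieces instead of their \emph{number}. Take $\eta$ small compared with $L\,\delta_\cE(1/2)$, and take $R$ with $\norm{b(x)}\le (L+\eta)|x|$ whenever $|x|\ge R$. Cut the geodesic word for $g_n$ into about $n/R$ consecutive blocks $h_i$ with $R\le|h_i|\le 2R$.

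Alignment then holds only on average. Let $t_i$ be the prefix, $v_i=\pi(t_i)b(h_i)$, and $j_n$ a norming functional for $b(g_n)$. The defects $d_i=(L+\eta)|h_i|-j_n(v_i)\ge 0$ sum to at most $\eta n$. Hence the blocks with $d_i\le 4\eta|h_i|$ carry at least $\frac34$ of the length, and for these, uniform convexity gives $\norm{w_i-w_k}<\frac12$ between normalized vectors. Only this fixed tolerance is needed, so the interchange of limits you worry about never arises.

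Only finitely many elements have length in $[R,2R]$. So pigeonhole, plus a subsequence in $n$, yields one fixed $h$ with $b(h)\ne 0$ occurring as a good block at $\gtrsim n$ distinct prefixes $t$. For any two of them, $\norm{\pi(t^{-1}t')b(h)-b(h)}<\frac12\norm{b(h)}$. Since the number of prefixes tends to infinity, $t^{-1}t'$ can be taken outside any prescribed ball. Evaluating a norming functional of the fixed vector $b(h)$ on $\pi(t^{-1}t')b(h)$ then contradicts the $c_0$ property.
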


Applications to the setting of Hilbert spaces are discussed in the third section, where we also show that the weaker notion of weak mixing does not force sublinear growth of $1$-cocycles, by constructing a weakly mixing representation of the free group $F_2$ together with a cocycle of linear growth along a sequence, see Proposition \ref{prop:weak}. 

A final section records a consequence of Theorem~\ref{thm:main} for vanishing of reduced cohomology for finitely generated amenable groups admitting controlled Følner sequences, see Corollary~\ref{cor:reducedcohozero}. This extends results of Tessera~\cite{Tess09}, who proved the vanishing of reduced cohomology for the $L^p$-representation associated with a mixing measure preserving actions $G \curvearrowright (X,\mu)$, where $\mu$ is an infinite measure and $1< p < \infty$.

\section{Proof of the main theorem}

\begin{proof}[Proof of Theorem~\ref{thm:main}]
At first, we use \cite[Proposition~2.3]{BFGM07} to replace the given norm on $\cE$ by an equivalent uniformly convex norm that is preserved by $\pi$. Hence, we may assume that $\cE$ is uniformly convex and $\pi$ is an isometric representation. Passing to the underlying real Banach space, we may assume that $\cE$ is real.
Assume for contradiction that
\[
\alpha:=\limsup_{|g|\to\infty}\frac{\norm{b(g)}}{|g|}>0.
\]
For $g\ne e$ set
\[
q(g):=\frac{\norm{b(g)}}{|g|}.
\]

\medskip

\noindent
\emph{Step 1.}
Fix
$
\Delta:=\delta_{\cE}(1/2)>0
$
and choose $\eta>0$ so small that
\[
0<\eta<\frac{\alpha\Delta}{8}.
\]
By the definition of $\limsup$, there exists $R\in \mathbb N$ such that $|x|\ge R$ implies $q(x)\le \alpha+\eta.$
Equivalently,
\[
|x|\ge R \implies \norm{b(x)}\le (\alpha+\eta)|x|.
\tag{1}
\]

\medskip

\noindent
\emph{Step 2.}
For $n\ge 1$, let
$
\alpha_{n}:=\max\{\norm{b(g)}: |g|=n\}.
$
Since spheres of radius $n$ are finite, the maximum exists. The sequence $(\alpha_{n})_{n\ge 1}$ is subadditive: if $|g|=n+m$ and $g=xy$ is obtained by cutting a geodesic word for $g$ after $n$ letters, then $|x|=n$, $|y|=m$, and
\[
\norm{b(g)}\le \norm{b(x)}+\norm{b(y)}\le \alpha_{n}+\alpha_{m}.
\]
Hence Fekete's lemma gives the existence of
\[
\lim_{n\to\infty}\frac{\alpha_{n}}{n}=\inf_{n\ge 1}\frac{\alpha_{n}}{n}.
\]
Since $\alpha_{n}/n=\max_{|g|=n} q(g)$, this limit equals $\alpha$. In particular, for every $n$ there exists $g_{n}\in G$ such that
\[
|g_{n}|=n
\qquad\text{and}\qquad
\norm{b(g_{n})}=\alpha_{n}\ge \alpha n.
\tag{2}
\]
Fix a geodesic word
$
g_{n}=s_{1}^{(n)}\cdots s_{n}^{(n)}
$
for each $n$.

\medskip

\noindent
\emph{Step 3.}
For $n$ large enough, partition the geodesic word for $g_{n}$ into consecutive geodesic subwords
\[
g_{n}=h_{1}^{(n)}h_{2}^{(n)}\cdots h_{m_{n}}^{(n)}
\]
such that for every $i$,
\[
R\le \ell_{i}^{(n)}:=|h_{i}^{(n)}|\le 2R,
\]
and
\[
\sum_{i=1}^{m_{n}}\ell_{i}^{(n)}=n.
\tag{3}
\]
Fix $n$ and suppress the superscript $(n)$ from the notation. For $1\le i\le m_{n}$ define the prefix
$
t_{i}:=h_{1}\cdots h_{i-1},
$
with $t_{1}=e$, and set
$
v_{i}:=\pi(t_{i})b(h_{i}).
$
Repeatedly applying the cocycle identity gives
\[
b(g_{n})=\sum_{i=1}^{m_{n}} v_{i}.
\tag{4}
\]
Since $\pi(t_{i})$ is an isometry,
$
\norm{v_{i}}=\norm{b(h_{i})}.
$
As $|h_{i}|\ge R$, estimate (1) yields
\[
\norm{v_{i}}\le (\alpha+\eta)\ell_{i}.
\tag{5}
\]
Because of (2), $b(g_{n})\ne 0$. Choose a norming functional $j_{n}\in \cE^{\ast}$ with
$
\norm{j_{n}}=1$ and $j_{n}(b(g_{n}))=\norm{b(g_{n})}.$

\medskip

\noindent
\emph{Step 4.}
For each block define
\[
d_{i}:=(\alpha+\eta)\ell_{i}-j_{n}(v_{i}).
\]
Since $j_{n}(v_{i})\le \norm{v_{i}}\le (\alpha+\eta)\ell_{i}$ by (5), we have $d_{i}\ge 0$. Summing and using (2), (3), and (4), we obtain
\begin{align*}
\sum_{i=1}^{m_{n}} d_{i}
&=(\alpha+\eta)\sum_{i=1}^{m_{n}}\ell_{i}-\sum_{i=1}^{m_{n}} j_{n}(v_{i})\\
&=(\alpha+\eta)n-j_{n}\Bigl(\sum_{i=1}^{m_{n}} v_{i}\Bigr)\\
&=(\alpha+\eta)n-j_{n}(b(g_{n}))\\
&=(\alpha+\eta)n-\norm{b(g_{n})}\\
&\le \eta n.
\end{align*}
Thus
\[
\sum_{i=1}^{m_{n}} d_{i}\le \eta n.
\]

Call an index $i$ \emph{good} if
$
d_{i}\le 4\eta\,\ell_{i}.
$
Let $G_{n}$ be the set of good indices. Then
\[
\sum_{i\notin G_{n}} 4\eta\,\ell_{i}<\sum_{i\notin G_{n}} d_{i}\le \sum_{i=1}^{m_{n}} d_{i}\le \eta n,
\]
so
\[
\sum_{i\notin G_{n}} \ell_{i}\le \frac14 n.
\]
Therefore
\[
\sum_{i\in G_{n}} \ell_{i}\ge \frac34 n.
\tag{6}
\]

\medskip

\noindent
\emph{Step 5.}
Fix $i\in G_{n}$. Since $d_{i}\le 4\eta\ell_{i}$, we have
\[
j_{n}(v_{i})\ge (\alpha-3\eta)\ell_{i}.
\tag{7}
\]
Combining (5) and (7), we get
\[
\norm{v_{i}}\ge j_{n}(v_{i})\ge (\alpha-3\eta)\ell_{i}>0.
\tag{8}
\]
Hence the normalized vector
\[
w_{i}:=\frac{v_{i}}{\norm{v_{i}}}
\in S_{\cE}
\]
is well defined, and
\[
j_{n}(w_{i})=\frac{j_{n}(v_{i})}{\norm{v_{i}}}
\ge \frac{(\alpha-3\eta)\ell_{i}}{(\alpha+\eta)\ell_{i}}
=1-\frac{4\eta}{\alpha+\eta}
>1-\Delta.
\]
Now fix $i,j\in G_{n}$. Since $\norm{w_{i}}=\norm{w_{j}}=1$ and $\norm{j_{n}}=1$, we get
\[
\Bigl\lVert\frac{w_{i}+w_{j}}{2}\Bigr\rVert
\ge j_{n}\Bigl(\frac{w_{i}+w_{j}}{2}\Bigr)
=\frac{j_{n}(w_{i})+j_{n}(w_{j})}{2}
>1-\Delta.
\]
By the definition of $\Delta=\delta_{\cE}(1/2)$, this forces $\norm{w_{i}-w_{j}}<\frac12$, hence
\[
\norm{w_{i}-w_{j}}<\frac12.
\tag{9}
\]

\medskip

\noindent
\emph{Step 6.}
Let
$
\mathcal A_{R}:=\{h\in G: R\le |h|\le 2R\}.
$
This is a finite set; write $M_{R}:=|\mathcal A_{R}|$. Since each good block has length at most $2R$, estimate (6) implies
\[
2R\,|G_{n}|\ge \sum_{i\in G_{n}}\ell_{i}\ge \frac34 n,
\]
hence
\[
|G_{n}|\ge \frac{3}{8R}n.
\]
Among the good blocks, some element of $\mathcal A_{R}$ occurs at least $|G_{n}|/M_{R}$ times. Therefore there exist $h^{(n)}\in \mathcal A_{R}$ and a subset $I_{n}\subset G_{n}$ such that
\[
h_{i}=h^{(n)} \quad (i\in I_{n}),
\]
and
\[
|I_{n}|\ge \frac{3}{8RM_{R}}n.
\]
Passing to a subsequence, we may assume that $h^{(n)}=h$ is independent of $n$. Set
$
X_{n}:=\{t_{i}: i\in I_{n}\}.
$
Since the prefixes $t_{i}$ are distinct, we have
\[
|X_{n}|=|I_{n}|\ge \frac{3}{8RM_{R}}n,
\]
so $|X_{n}|\to\infty$.

For $t\in X_{n}$, the corresponding vector is
$
\pi(t)b(h)=v_{i}
$
for the unique $i\in I_{n}$ with $t=t_{i}$. By (8),
\[
\norm{b(h)}=\norm{v_{i}}\ge (\alpha-3\eta)|h|>0.
\tag{10}
\]
Moreover, if $t,t'\in X_{n}$ correspond to good indices $i,j\in I_{n}$, then (9) gives
\[
\lVert\pi(t)b(h)-\pi(t')b(h)\rVert<\frac12\norm{b(h)}.
\tag{11}
\]

\medskip

\noindent
\emph{Step 7.}
Since $|X_{n}|\to\infty$ and balls in $G$ are finite, the difference sets $X_{n}^{-1}X_{n}$ cannot stay inside one fixed finite subset of $G$. Thus, we may choose
\[
\gamma_{n}\in X_{n}^{-1}X_{n}
\qquad\text{with}\qquad
|\gamma_{n}|\to\infty.
\]
Write $\gamma_{n}=t_{n}^{-1}s_{n}$ with $s_{n},t_{n}\in X_{n}$. Then by (11),
\[
\norm{\pi(\gamma_{n})b(h)-b(h)}
=\norm{\pi(t_{n})^{-1}(\pi(s_{n})b(h)-\pi(t_{n})b(h))}<\frac12\norm{b(h)}.
\tag{12}
\]
Choose a norming functional $\psi\in \cE^{\ast}$ for $b(h)$, so $\norm{\psi}=1$ and $\psi(b(h))=\norm{b(h)}$. By (12),
\[
\psi(\pi(\gamma_{n})b(h))
\ge \psi(b(h))-\norm{\pi(\gamma_{n})b(h)-b(h)}
>\frac12\norm{b(h)}
\qquad (n\ge 1).
\tag{13}
\]
On the other hand, since $\pi$ is a $c_{0}$-representation and $|\gamma_{n}|\to\infty$, we must have
$
\psi(\pi(\gamma_{n})b(h))\to 0,
$
contradicting (13). This contradiction proves the theorem.
\end{proof}

\section{The setting of Hilbert spaces}

\begin{corollary}\label{cor:hilbert}
Let $G$ be a finitely generated group, let $\pi\colon G\to \mathcal U(\cH)$ be a strongly mixing unitary representation on a Hilbert space $\cH$, and let $b\colon G\to \cH$ be a $1$-cocycle. Then
\[
\limsup_{|g|\to\infty}\frac{\norm{b(g)}}{|g|}=0.
\]
\end{corollary}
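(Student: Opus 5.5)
The corollary follows from Theorem~\ref{thm:main} once its hypotheses are checked in the Hilbert space setting. I will verify three things: a Hilbert space is superreflexive, a unitary representation is uniformly bounded, and strong mixing is the same as the $c_0$-condition.

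For superreflexivity, the parallelogram law gives
\[
\Bigl\lVert\frac{x+y}{2}\Bigr\rVert^{2}=1-\Bigl\lVert\frac{x-y}{2}\Bigr\rVert^{2}
\]
for unit vectors $x,y$. Hence
\[
\delta_{\cH}(\varepsilon)=1-\sqrt{1-\varepsilon^{2}/4}>0 .
\]
So $\cH$ is uniformly convex with its own norm, and in particular superreflexive. Uniform boundedness holds with $C=1$, since $\norm{\pi(g)}=1$ for every $g$.

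For the $c_0$-condition, strong mixing means $\langle\pi(g)\xi,\eta\rangle\to 0$ as $g\to\infty$ for all $\xi,\eta\in\cH$. By the Riesz representation theorem every $\varphi\in\cH^{\ast}$ has the form $\varphi=\langle\cdot,\eta\rangle$. Complex-conjugate linearity of the Riesz map does not matter, because only the vanishing of the limit is at stake. Thus $\varphi(\pi(g)\xi)\to 0$, so $\pi$ is a $c_0$-representation in the sense of the introduction.

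The cocycle identity is the same in both settings, so Theorem~\ref{thm:main} applies directly and yields $\limsup_{|g|\to\infty}\norm{b(g)}/|g|=0$.

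The only point needing care is the identification of strong mixing with the $c_0$-condition. If strong mixing is stated only through the diagonal coefficients $\langle\pi(g)\xi,\xi\rangle\to 0$, I would polarize to recover all matrix coefficients. For $t\in\{1,-1,i,-i\}$, the polarization identity writes $\langle\pi(g)\xi,\eta\rangle$ as a linear combination of $\langle\pi(g)(\xi+t\eta),\xi+t\eta\rangle$. Each of these tends to $0$, so the off-diagonal coefficients do as well. The rest of the argument is immediate.
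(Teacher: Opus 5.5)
Your proposal is correct and follows the same route as the paper. You check that a Hilbert space is uniformly convex (hence superreflexive), that strong mixing coincides with the $c_0$-condition via the Riesz identification $\cH^{\ast}\cong\cH$, and then invoke Theorem~\ref{thm:main}; your extra checks (uniform boundedness with $C=1$, the explicit modulus of convexity, and the polarization step) are correct and only spell out what the paper leaves implicit.
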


\begin{proof}
Every Hilbert space is uniformly convex. Moreover, for unitary representations on Hilbert space, the condition of being $c_{0}$ is exactly the usual strong mixing condition, since the dual can be identified with the Hilbert space itself through the inner product. Therefore Corollary~\ref{cor:hilbert} is an immediate consequence of Theorem~\ref{thm:main}.
\end{proof}

A weaker notion of mixing is that of weak mixing, which is equivalent to the absence of non-zero finite-dimensional subrepresentations. In this section we show that weak mixing does not force sublinear growth of $1$-cocycle; a fact that is surely known to experts. More precisely, we construct a weakly mixing representation of the free group $F_2$ together with a cocycle of linear growth along a sequence.

\begin{prop}\label{prop:weak}
There exists a finitely generated group $G$, a weakly mixing unitary representation $\pi\colon G\to \mathcal U(\cH)$, and a $1$-cocycle $b\colon G\to \cH$ such that
\[
\limsup_{|g|\to\infty}\frac{\|b(g)\|}{|g|}=1.
\]
\end{prop}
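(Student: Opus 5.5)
The plan is to use freeness of $F_2$ to choose the two generators independently. This lets one generator carry the weak mixing and the other carry the linear growth. Take $G=F_2$ with free generators $a,c$ and generating set $\{a^{\pm1},c^{\pm1}\}$. By the universal property of free groups, any pair of unitaries $U_a,U_c\in\mathcal U(\cH)$ defines a unitary representation $\pi$ with $\pi(a)=U_a$ and $\pi(c)=U_c$. Likewise, any pair of vectors $\xi_a,\xi_c\in\cH$ extends uniquely to a $1$-cocycle $b$ with $b(a)=\xi_a$ and $b(c)=\xi_c$. This is because a $1$-cocycle is the same as a homomorphism $F_2\to \cH\rtimes\mathcal U(\cH)$ lifting $\pi$, and a homomorphism out of $F_2$ is determined freely by the images of $a$ and $c$. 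I will use this freedom to decouple the two jobs.

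\textbf{Choices.}
\begin{itemize}
\item Let $\cH=L^2(S^1)$.
\item Let $\pi(c)=U$ be multiplication by $z$. This unitary has purely continuous (Lebesgue) spectrum, so it has no eigenvectors.
\item Let $\pi(a)=\mathrm{id}$.
\item Let $b(c)=0$ and $b(a)=\xi$ for a fixed unit vector $\xi$, say the constant function $1$.
\end{itemize}

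\textbf{Weak mixing.} Suppose $W\subset\cH$ is a nonzero finite-dimensional $\pi(G)$-invariant subspace. Then $W$ is invariant under the unitary $U$. A unitary on a nonzero finite-dimensional space has an eigenvector, so $U$ would have an eigenvector in $\cH$. This contradicts the continuous spectrum of $U$. Hence $\pi$ has no nonzero finite-dimensional subrepresentation, which is the characterization of weak mixing recalled before the proposition.

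\textbf{Growth.} The cocycle identity together with $\pi(a)=\mathrm{id}$ gives $b(a^n)=n\xi$. Since $|a^n|=n$ in the free group, we get $\norm{b(a^n)}/|a^n|=1$ for all $n$, so the $\limsup$ is at least $1$. For the reverse bound, write any $g$ as a geodesic word $s_1\cdots s_k$ in the generators and their inverses. Then $b(g)=\sum_{i}\pi(s_1\cdots s_{i-1})b(s_i)$. Moreover $\norm{b(s^{-1})}=\norm{\pi(s)^{-1}b(s)}=\norm{b(s)}\le 1$ for each generator $s$, so $\norm{b(g)}\le |g|$. Together these give $\limsup=1$.

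\textbf{Main obstacle.} The only real subtlety is ensuring weak mixing while one generator acts trivially. A first attempt using only powers of a single unitary fails: the mean ergodic theorem forces $\frac1n\sum_{j<n}U^j\xi\to0$ when $U$ has no invariant vectors. This is why the construction assigns the continuous spectrum to the second generator $c$, which a finite-dimensional invariant subspace cannot escape. I expect no further difficulty beyond citing that a unitary with purely continuous spectrum has no eigenvectors.
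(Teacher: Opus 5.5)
Your proof is correct and follows the paper's construction in outline: work in $F_2$, use freeness to prescribe the cocycle on the generators, and let one generator fix its own cocycle value so that $b(a^n)=n\,b(a)$. The difference is the choice of representation, and with it the weak mixing check. The paper uses the quasi-regular representation on $\ell^2(F_2/\langle x\rangle)$ and verifies weak mixing through $\pi\otimes\bar\pi$, i.e.\ by ruling out finite orbits of $F_2$ on $F_2/\mathbb Z\times F_2/\mathbb Z$. Your representation factors through the quotient $F_2\to\mathbb Z$ that kills $a$. After Fourier transform it is exactly $\ell^2(F_2/N)$, with $N$ the normal closure of $a$, and the constant function $1$ becomes $\delta_N$. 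So your example is the paper's with $\langle x\rangle$ replaced by its normal closure. This factorization lets you verify weak mixing by an elementary remark: a finite-dimensional invariant subspace would yield an eigenvector of multiplication by $z$. No tensor products or orbit counting are needed. The paper's orbit criterion, by contrast, works uniformly for any infinite-index subgroup containing $x$, normal or not. You also prove the upper bound $\norm{b(g)}\le |g|$ from the cocycle expansion along a geodesic word. The paper leaves this implicit when it asserts that the $\limsup$ equals $1$ rather than merely being at least $1$.
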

\begin{proof}

Let $F_2=\langle x,y\rangle$, and identify $\mathbb Z$ with the subgroup generated by $x$. We claim that the quasi-regular representation
\[
\lambda_{F_2/\mathbb Z}\colon F_2\to \mathcal U(\ell^2(F_2/\mathbb Z))
\]
is weakly mixing, and there exists a $1$-cocycle
$
b\colon F_2\to \ell^2(F_2/\mathbb Z)
$
such that
$
b(x^n)=n\,\delta_{\mathbb Z}
$ for all $n\ge 1$.
Since $\mathbb Z$ has infinite index in $F_2$, the representation $\lambda_{F_2/\mathbb Z}$ is weakly mixing. One way to see this is that weak mixing of $\lambda_{F_2/\mathbb Z}$ is equivalent to the absence of non-trivial invariant vectors in $\ell^2(F_2/\mathbb Z \times F_2/\mathbb Z)$ with respect to the diagonal $F_2$-action, and this is equivalent to the absence of finite $F_2$-orbits in
$
F_2/\mathbb Z\times F_2/\mathbb Z.
$

Since $F_2$ is the free group on the generators $x$ and $y$, a $1$-cocycle for a unitary representation of $F_2$ is uniquely determined by its values on $x$ and $y$, and these values can be prescribed arbitrarily. We apply this to the representation $\lambda_{F_2/\mathbb Z}$ and choose
\[
b(x)=\delta_{\mathbb Z},
\qquad
b(y)=0.
\]
Thus there exists a unique $1$-cocycle
$
b\colon F_2\to \ell^2(F_2/\mathbb Z)
$
with these values.

Since $x\in \mathbb Z$, the coset $\mathbb Z$ is fixed by left multiplication with $x$, so
$
\lambda_{F_2/\mathbb Z}(x)\delta_{\mathbb Z}=\delta_{\mathbb Z}.
$
Using the cocycle identity, it follows by induction that for every $n\ge 1$,
\[
b(x^n)=b(x^{n-1})+\lambda_{F_2/\mathbb Z}(x^{n-1})b(x)=b(x^{n-1})+\delta_{\mathbb Z}=n\,\delta_{\mathbb Z}.
\]
This proves the claim, and hence the proposition.
\end{proof}

\begin{remark}
It is an interesting open question to decide if an example as in Proposition~\ref{prop:weak} can be found for an amenable group, see \cite{CS15,CTV07,Oz18}.
\end{remark}

\section{Vanishing of reduced cohomology for amenable groups}

In this section we record a consequence of Theorem~\ref{thm:main} for finitely generated amenable groups admitting controlled Følner sequences. Here, a \emph{controlled Følner sequence} is a Følner sequence $(F_n)_{n\ge 1}$ such that there exists $C>0$ with 
$$|\partial F_n|\le \frac{C |F_n|}{\mathrm{diam}(F_n)}$$ for every $n\ge 1$, where $\mathrm{diam}(F_n)$ is the diameter of $F_n$ with respect to the word metric; see, for example, Tessera~\cite{Tess09}.

Cornulier and Tessera proved that for uniformly bounded Banach $G$-modules, sublinear 1-cocycles are almost coboundaries whenever the group admits a controlled Følner sequence; more precisely, this is Proposition~5.6 in \cite{CoTe20}. Therefore we obtain the following corollary:

\begin{corollary}\label{cor:reducedcohozero}
Let $G$ be a finitely generated amenable group admitting a controlled Følner sequence. Let $\pi$ be a uniformly bounded $c_0$-representation of $G$ on a superreflexive Banach space $\cE$. Then, the first reduced cohomology vanishes, $H^1_{\rm red}(G,\pi)=0.$ Equivalently, every 1-cocycle $b \colon G \to \cE$ is an almost coboundary.
\end{corollary}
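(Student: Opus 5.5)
The corollary follows by combining Theorem~\ref{thm:main} with the result of Cornulier--Tessera quoted above, \cite[Proposition~5.6]{CoTe20}. Fix a $1$-cocycle $b\colon G\to\cE$ for $\pi$. The hypotheses of Theorem~\ref{thm:main} hold: $G$ is finitely generated, $\cE$ is superreflexive, and $\pi$ is a uniformly bounded $c_0$-representation. Hence
\[
\limsup_{|g|\to\infty}\frac{\norm{b(g)}}{|g|}=0,
\]
so $b$ has sublinear growth with respect to the word length.

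Next I view $\cE$, with the action $\pi$, as a uniformly bounded Banach $G$-module. Since $G$ admits a controlled Følner sequence, \cite[Proposition~5.6]{CoTe20} then applies to the sublinear cocycle $b$. It shows that $b$ is an almost coboundary, meaning $b$ lies in the closure of the coboundaries $g\mapsto \pi(g)\xi-\xi$, $\xi\in\cE$, in the topology of pointwise convergence on $G$. For a finitely generated group this is the same as uniform convergence on the finite generating set. Since $b$ was arbitrary, $Z^1(G,\pi)=\overline{B^1(G,\pi)}$, i.e.\ $H^1_{\rm red}(G,\pi)=0$. The converse direction of the stated equivalence is just the definition of reduced cohomology.

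The only point that needs care is checking that the hypotheses of \cite[Proposition~5.6]{CoTe20} match our setting, in particular how \cite{CoTe20} defines ``sublinear'' and which topology it uses for ``almost coboundary''. If their notion of sublinearity is stated as $\norm{b(g)}=o(|g|)$, this is exactly the conclusion of Theorem~\ref{thm:main}. If it is stated in the renormalized form $\sup_{|g|\le n}\norm{b(g)}=o(n)$, the two are equivalent: for every $\varepsilon>0$ there is $R$ with $\norm{b(g)}\le\varepsilon|g|$ whenever $|g|\ge R$, and the finitely many $g$ with $|g|<R$ contribute only a bounded amount.

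One could also use the isometric renorming from \cite[Proposition~2.3]{BFGM07}, as in the proof of Theorem~\ref{thm:main}. It is not needed here, since \cite{CoTe20} is stated for uniformly bounded modules, and neither the vanishing of reduced cohomology nor sublinearity depends on passing to an equivalent norm.
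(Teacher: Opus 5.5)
Your proposal is correct and matches the paper's argument: Theorem~\ref{thm:main} gives sublinear growth of every cocycle, and \cite[Proposition~5.6]{CoTe20}, applied to uniformly bounded Banach $G$-modules over groups with a controlled Følner sequence, then makes each cocycle an almost coboundary. Your additional checks are correct and only make explicit what the paper leaves implicit, namely that the two formulations of sublinearity agree and that convergence on a finite generating set suffices for uniformly bounded $\pi$.
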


Corollary~\ref{cor:reducedcohozero} appears to be new at this level of generality. Indeed, Tessera proved in \cite{Tess09} the vanishing of reduced first cohomology for mixing $L^p$-representations, $1<p<\infty$, for a class of amenable groups with controlled Følner sequences. Since $L^p$-spaces are uniformly convex for $1<p<\infty$, the above corollary extends this vanishing statement from mixing $L^p$-representations to arbitrary isometric $c_0$-representations on uniformly convex Banach spaces.

Let us recall some classes of amenable groups known to admit controlled Følner sequences. Tessera studied controlled Følner sequences in \cite{Tess09} and recorded that such exist for polycyclic groups and several solvable groups of exponential growth, including semidirect products of the form $\mathbb Z[1/mn]\rtimes_{m/n}\mathbb Z$ (hence in particular the solvable Baumslag--Solitar groups $\mathrm{BS}(1,m)$). Finally, groups of polynomial growth also admit controlled Følner sequences, for instance by taking word balls. 
+
\begin{remark}
We are not aware of an explicitly recorded example in the literature of a finitely generated amenable group that does not admit a controlled Følner sequence; see the discussion in \cite{CS15}. However, such examples follow immediately from the following observation: a finitely generated amenable group admitting a controlled Følner sequence has Følner function at most exponential along a subsequence. If $(F_n)$ is a controlled Følner sequence, then $|\partial F_n|/|F_n| \leq C/\operatorname{diam}(F_n)$, so after setting $k_n \asymp \operatorname{diam}(F_n)$ one obtains $\operatorname{Føl}(k_n)\leq |F_n| \leq \exp(C'k_n)$. In particular, groups with superexponential Følner function cannot admit controlled Følner sequences. By Erschler's theorem \cite[Theorem~1]{Erschler03}, this applies for instance to the wreath product $\mathbf Z \wr \mathbf Z$.
\end{remark}

\section*{Acknowledgements}

The author thanks Alain Valette for helpful comments on an earlier version of this note.
GPT-5.4 was used to assist in drafting parts of this manuscript. All content was reviewed and substantially revised by the author, who is responsible for the final text.

\end{document}